\documentclass[12pt]{amsart}
\usepackage{mathrsfs,pstricks,ifpdf,tikz}
\usepackage{amsfonts}
\usepackage{enumerate}
\topmargin 0 pt \textheight 46\baselineskip \advance\textheight by \topskip
\setlength{\parindent}{16pt}
\setlength{\parskip}{3pt plus 1pt minus 1pt}  %%
\setlength{\textwidth}{145mm}
\setlength{\oddsidemargin}{5.6mm} \setlength{\evensidemargin}{5.6mm}

\usepackage{makecell}

\usepackage{latexsym}
\usepackage{amsmath,amssymb}
\usepackage{amsthm}
\usepackage{ifthen,calc}
\usepackage{color}
\usepackage{graphicx}
\usepackage{latexsym}
\usepackage{multirow}
\usepackage{diagbox}
\usepackage{float}
\usepackage[format=hang, margin=10pt]{caption}

\newtheorem{theorem}{Theorem}%[section]

\newtheorem{remark}[theorem]{Remark}
\newtheorem{lemma}[theorem]{Lemma}

\newcommand{\qihao}{\fontsize{7.5pt}{\baselineskip}\selectfont}%%%%%%字体大小

  %%JG

\newcounter{hours}
\newcounter{minutes}
\newcommand{\printtime}{
    \setcounter{hours}{\time/60}%
    \setcounter{minutes}{\time-\value{hours}*60}
    \ifthenelse{\value{hours}<10}{0}{}\thehours:%
    \ifthenelse{\value{minutes}<10}{0}{}\theminutes}

\begin{document}
\title{A note on the $4$-girth-thickness of $K_{n,n,n}$}

\author{Xia Guo}
\address{School of Mathematics, Tianjin University, 300072, Tianjin, China}
\email{guoxia@tju.edu.cn}
\author{Yan Yang}
\address{School of Mathematics, Tianjin University, 300072, Tianjin, China}
\email{yanyang@tju.edu.cn    (Corresponding author: Yan YANG)}
\thanks{This work was supported by NNSF of China under Grant  No. 11401430}

\begin{abstract}
The $4$-girth-thickness $\theta(4,G)$ of a graph $G$ is the minimum number of planar subgraphs of girth at least four whose union is $G$. In this paper, we obtain that the 4-girth-thickness of complete tripartite graph $K_{n,n,n}$ is $\big\lceil\frac{n+1}{2}\big\rceil$ except for $\theta(4,K_{1,1,1})=2$. And we also show that the $4$-girth-thickness of the complete graph $K_{10}$ is three which disprove the conjecture $\theta(4,K_{10})=4$ posed by Rubio-Montiel (Ars Math Contemp 14(2) (2018) 319).
\end{abstract}

\keywords{thickness; $4$-girth-thickness; complete tripartite graph.}

\subjclass[2010] {05C10}
\maketitle

\section{Introduction}
\noindent

The \textit{thickness} $\theta(G)$ of a graph $G$ is the minimum number of planar subgraphs whose union is $G$. It was defined by W.T.Tutte \cite{Tut63} in 1963. Then, the thicknesses of some graphs have been obtained when the graphs are hypercube \cite{Kl67}, complete graph \cite{AG76,BH65,Vas76}, complete bipartite graph \cite{BHM64} and some complete multipartite graphs \cite{CYa16,Yan14,Yan17}.

In 2017, Rubio-Montiel \cite{R17} define the $g$-girth-thickness $\theta(g,G)$ of a graph $G$ as the minimum number of planar subgraphs whose union is $G$ with the girth of each subgraph is at least $g$. It is a generalization of the usual thickness in which the $3$-girth-thickness $\theta(3,G)$ is the usual thickness $\theta(G)$. He also determined the $4$-girth-thickness of the complete graph $K_n$ except $K_{10}$ and he conjecture that $\theta(4,K_{10})=4$. Let $K_{n,n,n}$ denote a complete tripartite graph in which each part contains $n$ $(n\geq 1)$ vertices. In \cite{Yan17}, Yang obtained $\theta(K_{n,n,n})=\Big\lceil\frac{n+1}{3}\Big\rceil$ when $n\equiv 3$(mod $6$).

In this paper, we determine $\theta(4,K_{n,n,n})$ for all values of $n$ and we also give a decomposition of $K_{10}$ with three planar subgraphs of girth at least four, which shows $\theta(4,K_{10})=3$.

\section{the $4$-girth-thickness of $K_{n,n,n}$}
\noindent

\begin{lemma}\cite{BM08}\label{l 1}
A planar graph with $n$ vertices and girth $g$ has edges at most $\frac{g}{g-2}(n-2)$.
\end{lemma}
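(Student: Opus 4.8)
The plan is a direct application of Euler's formula for plane graphs, together with the observation that large girth forces every face to be large.

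First I would dispose of the trivial cases and make harmless reductions. Let $G$ have $n$ vertices and $m$ edges. If $G$ is a forest then $m\le n-1$ and a one-line computation gives the bound once the statement is non-vacuous, so we may assume $G$ contains a cycle and hence has finite girth $g\ge 3$. We may also assume $G$ is connected: if not, join its components by bridges (drawn through a common face of a plane embedding); this keeps the graph planar, creates no new cycle so the girth stays at least $g$, and only increases the edge count, so the inequality for the resulting connected graph implies it for $G$.

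For the main computation, fix a plane embedding of the (now connected) graph $G$ and let $f$ be its number of faces, so Euler's formula gives $n-m+f=2$. The key point is that every face is bounded by a closed walk of length at least $g$. Summing these boundary lengths over all $f$ faces counts each edge exactly twice, hence $2m=\sum_{F}\ell(F)\ge gf$, i.e. $f\le 2m/g$. Substituting into Euler's formula yields $2=n-m+f\le n-m+2m/g$, so $m\cdot\frac{g-2}{g}\le n-2$, which is precisely $m\le\frac{g}{g-2}(n-2)$; equality forces every face to have length exactly $g$.

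The one genuinely non-formal step is the claim that each face boundary has length at least $g$. For a $2$-connected plane graph this is immediate, since every face is bounded by a cycle and cycles have length at least $g$ by definition of girth. For a general connected graph a face boundary is only a closed walk and may traverse bridges twice, so I expect the only real care to be needed here: one handles it either by induction on the block decomposition (the bound for each $2$-connected block, plus the fact that trees satisfy it, combine additively), or by observing directly that a face walk of a graph of girth $g$ still contains a cycle of $G$ and therefore has length at least $g$. Apart from this bookkeeping around bridges and components, the argument is just Euler's formula and an edge–face double count.
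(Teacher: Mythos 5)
Your argument is correct: it is the standard Euler-formula count ($n-m+f=2$ together with $2m\ge gf$ from the fact that every face boundary has length at least $g$), and you rightly flag the only delicate point, namely faces whose boundary walks traverse bridges. The paper itself gives no proof of this lemma --- it is quoted directly from Bondy and Murty \cite{BM08} --- and your proof is precisely the textbook argument being cited, so there is nothing to reconcile.
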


\begin{theorem}
The $4$-girth-thickness of $K_{n,n,n}$ is $$\theta(4,K_{n,n,n})=\Big\lceil\frac{n+1}{2}\Big\rceil$$
except for $\theta(4,K_{1,1,1})=2$.
\end{theorem}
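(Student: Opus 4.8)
The plan is to prove matching lower and upper bounds, treating $K_{1,1,1}$ separately.

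\textit{Lower bound and the case $n=1$.} The graph $K_{n,n,n}$ has $3n$ vertices and $3n^{2}$ edges, so by Lemma~\ref{l 1} (with $g=4$) every planar subgraph of girth at least $4$ on these vertices has at most $2(3n-2)=6n-4$ edges; hence
$$\theta(4,K_{n,n,n})\ \geq\ \left\lceil\frac{3n^{2}}{6n-4}\right\rceil .$$
Since $\frac{3n^{2}}{6n-4}=\frac{n}{2}+\frac{n}{3n-2}$ and $\frac13<\frac{n}{3n-2}\leq\frac12$ for $n\geq 2$, a short computation gives that the right-hand side equals $\lceil (n+1)/2\rceil$ for every $n\geq 2$ and equals $2$ for $n=1$. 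For $n=1$, $K_{1,1,1}=K_{3}$ is a triangle, which is not itself of girth at least $4$, so $\theta(4,K_{1,1,1})\geq 2$; and $K_{3}=P_{3}\cup K_{2}$ gives a decomposition into two forests, so $\theta(4,K_{1,1,1})=2$. This settles $n=1$ and proves ``$\geq$'' in all cases.

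\textit{Upper bound for $n\geq 2$.} It remains to decompose $E(K_{n,n,n})$ into $\lceil (n+1)/2\rceil$ planar subgraphs of girth at least $4$. Because a \emph{simple bipartite} planar graph is automatically of girth at least $4$, it suffices to find a decomposition into that many planar \emph{bipartite} subgraphs; and since the three parts $A,B,C$ are independent, we may split each part arbitrarily across the two colour classes of a piece. I would carry this out by induction on $n$ with step $2$: this treats the two parities uniformly and matches $\lceil (n+3)/2\rceil=\lceil (n+1)/2\rceil+1$, so each step supplies exactly one new piece. Writing $A=\{a_{1},\dots,a_{n}\}$ and $B,C$ likewise, passing from $K_{n,n,n}$ to $K_{n+2,n+2,n+2}$ adjoins six vertices $a_{n+1},a_{n+2},\dots,c_{n+2}$ together with the $12n+12$ edges meeting them. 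One keeps the $\lceil (n+1)/2\rceil$ pieces of the hypothesis, but built with a fixed planar embedding reserving certain quadrilateral faces whose boundary colours allow attaching the new vertices (each new $A$-vertex only to $B\cup C$-vertices, and so on); one then inserts a bounded number --- roughly twelve --- of the new edges into the reserved faces of each old piece without creating a $3$-cycle or spoiling planarity, and collects the remaining new edges into a single brand-new $(\lceil (n+1)/2\rceil+1)$-st piece, which has room for $6(n+2)-4$ edges. The base cases $n=2,3$ are given by explicit lists --- for example, for $n=2$ one piece is the $K_{2,2}$ on $\{a_{1},a_{2}\}$ and $\{b_{1},b_{2}\}$ together with $c_{1}a_{1},c_{1}a_{2},c_{2}b_{1},c_{2}b_{2}$, and the other is the union of the paths $a_{1}c_{2}a_{2}$ and $b_{1}c_{1}b_{2}$ --- and a slightly larger start (up to $n=4,5$) may be needed before the reserved-face mechanism has room to run.

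The main obstacle is making the inductive step concrete, and here the two parities genuinely differ: the ``slack'' $\lceil (n+1)/2\rceil(6n-4)-3n^{2}$ is about $4n$ when $n$ is even but only $n-2$ when $n$ is odd, so in the odd case the pieces must be essentially maximal (near-quadrangulations) with almost no waste to spare. One must therefore pin down, for each piece of $K_{n,n,n}$, not merely its edge set but a concrete near-quadrangular drawing with designated growth faces of the correct colour pattern, and then verify simultaneously that every new edge is placed, no piece gains a triangle or loses planarity, the pieces stay pairwise edge-disjoint, and the counts close for \emph{both} parities. An alternative avoiding induction is a direct rotational construction: index the vertices by $\mathbb{Z}_{n}$ and build each piece from a Hamiltonian cycle of $K_{n,n,n}$ of the shape $a_{0}b_{0}c_{0}a_{n-1}b_{n-1}c_{n-1}\cdots$ together with a non-crossing family of ``long'' chords (endpoints at cycle-distance at least $3$ and in different parts, so that no triangle appears) turning it into a near-quadrangulation; the difficulty is identical --- choosing the chord families of the $\lceil (n+1)/2\rceil$ pieces so that they are simultaneously non-crossing, triangle-free, and exhaust exactly the edges not covered by the cycles.
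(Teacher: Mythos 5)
Your lower bound is correct and is essentially the paper's own computation, and your handling of $K_{1,1,1}$ is fine. The problem is the upper bound, which is where all the content of the theorem lies: what you give is a plan, not a proof, and you say so yourself (``the main obstacle is making the inductive step concrete''). The step from $K_{n,n,n}$ to $K_{n+2,n+2,n+2}$ requires distributing $12n+12$ new edges, of which only $6(n+2)-4=6n+8$ fit into the one new piece; the remaining $6n+4$ or so must be absorbed by the $\lceil (n+1)/2\rceil$ old pieces, i.e.\ on the order of $12$ edges each, and for odd $n$ your own slack count shows the old pieces are already near-extremal. Nothing in the proposal exhibits the ``reserved quadrilateral faces with the correct colour pattern'' whose existence the whole induction hinges on, nor verifies that twelve new edges per piece can be inserted without creating triangles or crossings. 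As it stands the inductive step could fail, and no amount of enlarging the base case is shown to repair it. So there is a genuine gap: the explicit decomposition, which is the theorem, is missing.

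For comparison, the paper does not induct with step $2$; it gives a direct construction for $n=2p$ and then perturbs it once to get $n=2p+1$. For even $n$, each of $p$ pieces is built from a cycle of length $4p$ alternating $U$- and $V$-vertices (with the $V$-labels shifted by $2i$ from piece to piece, which is what makes the pieces edge-disjoint and exhausts $U\times V$); one $w$-vertex is placed inside the cycle and joined to all of $U$, another outside and joined to all of $V$, and the remaining $2p-2$ $w$-vertices are placed in faces as degree-$2$ vertices. A final $(p+1)$-st piece picks up the leftover $W$-edges. The odd case is obtained by adding $u_{2p+1},v_{2p+1},w_{2p+1}$ to each piece of the even decomposition, exploiting exactly the kind of designated faces your sketch postulates---but there they are identified explicitly. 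Your closing suggestion of a rotational construction from Hamiltonian cycles is in fact close in spirit to what the paper does for the $U\cup V$ part; if you want to salvage your write-up, the shortest route is to abandon the step-$2$ induction and work out that explicit cyclic construction, doing the even case first and the odd case as a one-vertex-per-part augmentation.
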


\begin{proof}
It is trivial for $n=1$, $\theta(4,K_{1,1,1})=2$.
When $n>1$, because $|E(K_{n,n,n})|=3n^2$, $|V(K_{n,n,n})|=3n$, from Lemma \ref{l 1}, we have
$$\theta(4,K_{n,n,n})\geq\Big\lceil\frac{3n^2}{2(3n-2)}\Big\rceil=
\Big\lceil{\frac{n}{2}+\frac{1}{3}+\frac{2}{3(3n-2)}}\Big\rceil=
%\big\lceil{\frac{n}{2}+\frac{1}{3}}\big\rceil=
\Big\lceil\frac{n+1}{2}\Big\rceil
.$$

In the following, we give a decomposition of $K_{n,n,n}$ into $\big\lceil\frac{n+1}{2}\big\rceil$ planar subgraphs of girth at least four to complete the proof.
Let the vertex partition of $K_{n,n,n}$ be $(U,V,W)$, where $U=\{u_1, \dots, u_{n}\}$, $V=\{v_1, \dots,v_{n}\}$ and $W=\{w_1,\dots,w_n\}$.
In this proof, all the subscripts of vertices are taken modulo $2p$, except that of $u_{2p+1},v_{2p+1},w_{2p+1}$.

{\bf Case 1.}~ When $n=2p ~(p\geq 1)$.

Let $G_1,\dots,G_{p+1}$ be the graphs whose edge set is empty and vertex set is the same as $V(K_{2p,2p,2p})$.

\noindent {\bf Step 1:} For each $G_i$ $(1\leq i\leq p)$, arrange all the vertices $u_1,v_{3-2i},u_2,v_{4-2i},$ $u_3,v_{5-2i},\dots,u_{2p},v_{2p-2i+2}$ on a circle and join $u_j$ to $v_{j+2-2i}$ and $v_{j+1-2i}$, $1\leq j\leq 2p$. Then we get a cycle of length $4p$, denote it by $G_i^1$ $(1\leq i\leq p)$.

\noindent {\bf Step 2:} For each $G_i^1$ $(1\leq i\leq p)$, place the vertex $w_{2i-1}$ inside the cycle and join it to  $u_1,\dots,u_{2p}$, place the vertex $w_{2i}$ outside the cycle and join it to  $v_1,\dots,v_{2p}$. Then we get a planar graph $G_i^2$ $(1\leq i\leq p)$.

\noindent {\bf Step 3:} For each $G_i^2$ $(1\leq i\leq p)$, place vertices $w_{2j}$ for $1\leq j\leq p$ and $j\neq i$, inside of the quadrilateral $w_{2i-1}u_{2i-1}v_1u_{2i}$ and join each of them to vertices $u_{2i-1}$ and $u_{2i}$. Place vertices $w_{2j-1}$, for $1\leq j\leq p$ and $j\neq i$, inside of the quadrilateral $w_{2i}v_{2i-1}u_{k}v_{2i}$, in which $u_k$ is some vertex from $U$. Join each of them to vertices $v_{2i-1}$ and $v_{2i}$. Then we get a planar graph $\overline{G}_i$ $(1\leq i\leq p)$.

\noindent {\bf Step 4:} For $G_{p+1}$, join $w_{2i-1}$ to both $v_{2i-1}$ and $v_{2i}$, join $w_{2i}$ to  both $u_{2i-1}$ and $u_{2i}$, for $1\leq i\leq p$, then we get a planar graph $\overline{G}_{p+1}$.

For  $\overline{G}_1\cup \cdots \cup \overline{G}_{p+1}=K_{n,n,n}$, and the girth of $\overline{G}_{i}$ $(1\leq i\leq p+1)$ is at least four, we obtain a $4$-girth planar decomposition of $K_{2p,2p,2p}$ with $p+1$ planar subgraphs. Figure \ref{f 2} shows a $4$-girth planar decomposition of $K_{4,4,4}$ with three planar subgraphs.

\vskip0.5cm
\begin{figure}[htb]
\begin{center}
\begin{tikzpicture}
[inner sep=0pt]
\node(11) at(-3.15,1)[circle,draw]{\scriptsize $u_1$};
\node(21) at(-2.25,1)[circle,draw]{\scriptsize $v_1$};
\node(12) at(-1.35,1)[circle,draw]{\scriptsize $u_2$};
\node(22) at(-0.45,1) [circle,draw]{\scriptsize $v_2$};
\node(13) at(0.45,1)[circle,draw]{\scriptsize $u_3$};
\node(23) at(1.35,1)[circle,draw]{\scriptsize $v_3$};
\node(14) at(2.25,1)[circle,draw]{\scriptsize $u_4$};
\node(24) at(3.15,1) [circle,draw]{\scriptsize $v_4$};
\draw[-](11) to  (21);\draw[-](21) to  (12);
\draw[-](12) to  (22);\draw[-](22) to  (13);
\draw[-](13) to  (23);\draw[-](23) to  (14);
\draw[-](14) to  (24);
\draw[-](11)..controls+(0,2)and+(-1,0)..(0,3);
\draw[-](24)..controls+(0,2)and+(1,0)..(0,3);
\node(31) at(0,2.6)[circle,draw]{\qihao $w_1$};
\node(34) at(-1.1,1.7)[circle,draw]{\qihao $w_4$};
\node(32) at(0,-1)[circle,draw]{\qihao $w_2$};
\node(33) at(-0.8,0.3) [circle,draw]{\qihao $w_3$};
\draw[-](31) to  (11);\draw[-](31) to  (12);
\draw[-](31) to  (13);\draw[-](31) to  (14);
\draw[-](32) to  (21);\draw[-](32) to  (22);
\draw[-](32) to  (23);\draw[-](32) to  (24);
\draw[-](34) to  (11);\draw[-](34) to  (12);
\draw[-](33) to  (21);\draw[-](33) to  (22);
%%%%%%%%%%%%%%%%%%%%%%%%%%%%%%%%%%%%%%%%%%%%%%%%%%%%%%%%%%%%%%%%%%%%%%%%%%%%%%%%
\begin{scope}[xshift=7.4cm]
\node(11) at(-3.15,1)[circle,draw]{\scriptsize $u_1$};
\node(21) at(-2.25,1)[circle,draw]{\scriptsize $v_3$};
\node(12) at(-1.35,1)[circle,draw]{\scriptsize $u_2$};
\node(22) at(-0.45,1) [circle,draw]{\scriptsize $v_4$};
\node(13) at(0.45,1)[circle,draw]{\scriptsize $u_3$};
\node(23) at(1.35,1)[circle,draw]{\scriptsize $v_1$};
\node(14) at(2.25,1)[circle,draw]{\scriptsize $u_4$};
\node(24) at(3.15,1) [circle,draw]{\scriptsize $v_2$};
\draw[-](11) to  (21);\draw[-](21) to  (12);
\draw[-](12) to  (22);\draw[-](22) to  (13);
\draw[-](13) to  (23);\draw[-](23) to  (14);
\draw[-](14) to  (24);
\draw[-](11)..controls+(0,2)and+(-1,0)..(0,3);
\draw[-](24)..controls+(0,2)and+(1,0)..(0,3);
\node(31) at(0,2.6)[circle,draw]{\qihao $w_3$};
\node(34) at(0.7,1.7)[circle,draw]{\qihao $w_2$};
\node(32) at(0,-1)[circle,draw]{\qihao $w_4$};
\node(33) at(-0.8,0.3) [circle,draw]{\qihao $w_1$};
\draw[-](31) to  (11);\draw[-](31) to  (12);
\draw[-](31) to  (13);\draw[-](31) to  (14);
\draw[-](32) to  (21);\draw[-](32) to  (22);
\draw[-](32) to  (23);\draw[-](32) to  (24);
\draw[-](34) to  (13);\draw[-](34) to  (14);
\draw[-](33) to  (21);\draw[-](33) to  (22);
\end{scope}
\end{tikzpicture}
\vskip0.5cm    (a)  The graph $\overline{G}_1$\  \ \ \ \ \ \  \ \ \  \ \  \  \  \ \ \ \  \ \ \ \ \  \ \ \     (b)  The graph $\overline{G}_2$
\end{center}
\end{figure}
\vskip0.5cm
\begin{figure}[htb]
\begin{center}
\begin{tikzpicture}
[inner sep=0pt]
\node(11) at(3,1)[circle,draw]{\scriptsize $u_1$};
\node(21) at(0,1)[circle,draw]{\scriptsize $v_1$};
\node(12) at(3,-1)[circle,draw]{\scriptsize $u_2$};
\node(22) at(0,-1) [circle,draw]{\scriptsize $v_2$};
\node(13) at(4.5,1)[circle,draw]{\scriptsize $u_3$};
\node(23) at(1.5,1)[circle,draw]{\scriptsize $v_3$};
\node(14) at(4.5,-1)[circle,draw]{\scriptsize $u_4$};
\node(24) at(1.5,-1) [circle,draw]{\scriptsize $v_4$};
\node(31) at(0,0)[circle,draw]{\qihao $w_1$};
\node(33) at(1.5,0)[circle,draw]{\qihao $w_3$};
\node(32) at(3,0)[circle,draw]{\qihao $w_2$};
\node(34) at(4.5,0) [circle,draw]{\qihao $w_4$};
\draw[-](31) to  (21);\draw[-](31) to  (22);
\draw[-](32) to  (11);\draw[-](32) to  (12);
\draw[-](33) to  (23);\draw[-](33) to  (24);
\draw[-](34) to  (13);\draw[-](34) to  (14);
\end{tikzpicture}
\vskip0.5cm  (c)  The graph $\overline{G}_3$
\caption{A $4$-girth planar decomposition of $K_{4,4,4}$}
\label{f 2}
\end{center}
\end{figure}
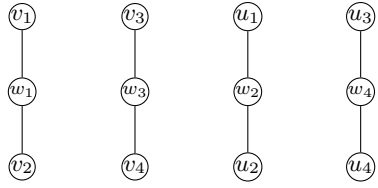

{\bf Case 2.}~ When $n=2p+1~(p > 1)$.

Base on the $4$-girth planar decomposition $\{\overline{G}_1,\cdots, \overline{G}_{p+1}\}$  of $K_{2p,2p,2p}$, by adding vertices and edges to each $\overline{G}_i$ $(1\leq i\leq p+1)$ and some other modifications on it, we will get a $4$-girth planar decomposition of $K_{2p+1,2p+1,2p+1}$ with $p+1$ subgraphs.

\noindent {\bf Step 1:} (Add $u_{2p+1}$ to $\overline{G}_{i}, 1\leq i\leq p$) For each $\overline{G}_{i}$ $(1\leq i\leq p)$,  we notice that the order of the $p-1$ interior vertices $w_{2j}$, $1\leq j\leq p,$ and $j\neq i$ in the quadrilateral $w_{2i-1}u_{2i-1}v_{1}u_{2i}$ of $\overline{G}_{i}$ has no effect on the planarity of $\overline{G}_{i}$. We adjust the order of them, such that $w_{2i-1}u_{2i-1}w_{2p-2i+2}u_{2i}$ is a face of a plane embedding of $\overline{G}_{i}$. Place the vertex $u_{2p+1}$ in this face and join it to both $w_{2i-1}$ and $w_{2p-2i+2}$. We denote the planar graph we obtain by $\widehat{G}_i$ $(1\leq i\leq p)$.

\noindent {\bf Step 2:} (Add $v_{2p+1}$ and $w_{2p+1}$ to $\widehat{G}_1$)  Delete the edge $v_1u_2$ in $\widehat{G}_1$, put both $v_{2p+1}$ and $w_{2p+1}$ in the face $w_{k}u_{1}v_{1}w_{t}v_2u_2$ in which $w_{k}$ is some vertex from $\{w_{2j}\mid 1<j\leq p\}$ and $w_{t}$ is some vertex from $\{w_{2j-1}\mid 1<j\leq p\}$. Join $v_{2p+1}$ to $w_{2p+1}$, join $v_{2p+1}$ to $u_1,u_2$, and join $w_{2p+1}$ to $v_1,v_2$, we get a planar graph $\widetilde{G}_{1}$.

\noindent {\bf Step 3:} (Add $v_{2p+1}$ and $w_{2p+1}$ to $\widehat{G}_i, 2\leq i\leq p$) For each $\widehat{G}_i$ $(2\leq i\leq p)$, place the vertex $v_{2p+1}$ in the face $w_{k}u_{2i-1}v_1u_{2i}$ in which $w_{k}$ is some vertex from $\{w_{2j}\mid 1\leq j\leq p
\mbox{ and }j\neq i\}$,  and join it to $u_{2i-1}$ and $u_{2i}$. Place the vertex $w_{2p+1}$ in the face $w_{k}v_{2i-1}u_{t}v_{2i}$ in which $w_{k}$ is some vertex from $\{w_{2j-1}\mid 1\leq j\leq p \mbox{ and }j\neq i\}$ and $u_{t}$ is some vertex from $U$. Join $w_{2p+1}$ to both $v_{2i-1}$ and $v_{2i}$, we get a planar graph $\widetilde{G}_{i}$ $(2\leq i\leq p)$.

\noindent {\bf Step 4:} (Add $u_{2p+1}, v_{2p+1}$ and $w_{2p+1}$ to $\overline{G}_{p+1}$) We add $u_{2p+1}, v_{2p+1}$ and $w_{2p+1}$ to $\overline{G}_{p+1}$. For $1\leq i\leq 2p$, join $u_{2p+1}$ to each $v_{i}$, join $v_{2p+1}$ to each $w_{i}$, join $w_{2p+1}$ to each $u_{i}$, join $u_{2p+1}$ to both $v_{2p+1}$ and $w_{2p+1}$, and join $v_{1}$ to $u_{2}$, then we get a planar graph $\widetilde{G}_{p+1}$. Figure \ref{f 3} shows a plane embedding of
$\widetilde{G}_{p+1}$.

\begin{figure}[htb]
\begin{center}
\begin{tikzpicture}
[inner sep=0pt]
\filldraw [white] (0,0) circle (1.0pt);
\begin{scope}[xshift=6.8cm]
\filldraw [black] (0,0) circle (1.0pt)
                  (-1,0) circle (1.0pt)
                  (-2,0) circle (1.0pt);
\draw[-](-2,0) to  (3,0);
\draw  (0,0.2)  node {\scriptsize $u_2$};
\draw  (-1.1,0.2)  node {\scriptsize $w_2$};
\draw  (-2,0.2)  node {\scriptsize $u_1$};
\filldraw [black] (-2.7,0) circle (1.5pt)
                  (-3,0) circle (1.5pt)
                  (-3.3,0) circle (1.5pt);
\filldraw [black] (-4,0) circle (1.0pt)
                  (-5,0) circle (1.0pt)
                  (-6,0) circle (1.0pt);
\draw[-](-4,0) to  (-6,0);
\draw  (-3.8,0.15)  node {\scriptsize $u_{2p}$};
\draw  (-5.1,0.2)  node {\scriptsize $w_{2p}$};
\draw  (-6,0.2)  node {\scriptsize $u_{2p-1}$};
%%%%%%%%%%%%%%%%%%%%%%%%%%%%%%%%%%%%%%%%%%%%%%%%%%%%%%%%%%%%%%%%%%%%%%%%%
\filldraw [black] (3,0) circle (1.0pt)
                  (1,0) circle (1.0pt)
                  (2,0) circle (1.0pt);
\draw  (1,0.2)  node {\scriptsize $v_1$};
\draw  (2,0.2)  node {\scriptsize $w_1$};
\draw  (3,0.2)  node {\scriptsize $v_2$};
\filldraw [black] (3.7,0) circle (1.5pt)
                  (4,0) circle (1.5pt)
                  (4.3,0) circle (1.5pt);
\filldraw [black] (5,0) circle (1.0pt)
                  (6,0) circle (1.0pt)
                  (7,0) circle (1.0pt);
\draw[-](5,0) to  (7,0);
\draw  (5.35,-0.2)  node {\scriptsize $v_{2p-1}$};
\draw  (6,0.2)  node {\scriptsize $w_{2p-1}$};
\draw  (6.8,0.2)  node {\scriptsize $v_{2p}$};
\filldraw [black] (0,1.5) circle (1.0pt)
                  (-3,-1.5) circle (1.0pt)
                  (4,-1.5) circle (1.0pt);
\draw  (0,1.7)  node {\scriptsize $v_{2p+1}$};
\draw  (-3,-1.7)  node {\scriptsize $w_{2p+1}$};
\draw  (4,-1.7)  node {\scriptsize $u_{2p+1}$};
\draw[-](0,1.5) to  (-1,0);\draw[-](0,1.5) to  (-5,0);
\draw[-](0,1.5) to  (2,0);\draw[-](0,1.5) to  (6,0);
\draw[-](-3,-1.5) to  (-6,0);\draw[-](-3,-1.5) to  (-4,0);
\draw[-](-3,-1.5) to  (-2,0);\draw[-](-3,-1.5) to  (0,0);
\draw[-](4,-1.5) to  (7,0);\draw[-](4,-1.5) to  (5,0);
\draw[-](4,-1.5) to  (3,0);\draw[-](4,-1.5) to  (1,0);
%%%%%%%%%%%%%%%%%%%%%%%%%%%%%%%%%%%%%%%%%%%%%%%%%%%%%%%%%%%%%%%%%
\draw  (0,1.5)..controls+(7,-0.3)and+(6,1.2)..(4,-1.5);
\draw[-](4,-1.5) to  (-3,-1.5);
\end{scope}
\end{tikzpicture}
\caption{The graph $\widetilde{G}_{p+1}$}
\label{f 3}
\end{center}
\end{figure}
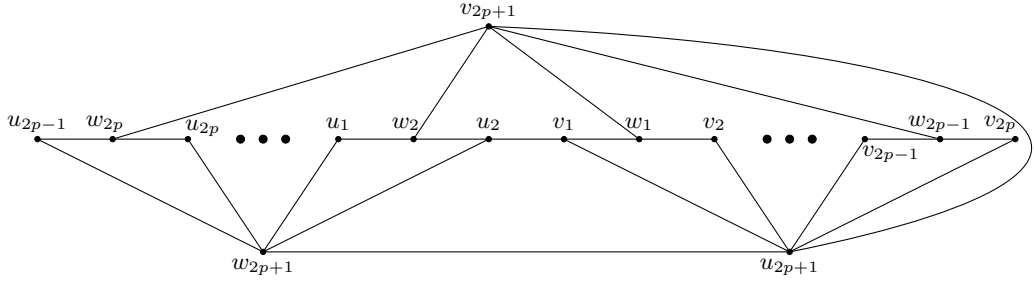

For  $\widetilde{G}_1\cup \cdots \cup \widetilde{G}_{p+1}=K_{n,n,n}$, and the girth of $\widetilde{G}_{i}$ $(1\leq i\leq p+1)$ is at least four, we obtain a $4$-girth planar decomposition of $K_{2p+1,2p+1,2p+1}$ with $p+1$ planar subgraphs. Figure \ref{f 4}  shows a $4$-girth planar decomposition of $K_{5,5,5}$ with three planar subgraphs.

\begin{figure}[htb]
\begin{center}
\begin{tikzpicture}
[inner sep=0pt]
\node(11) at(-3.15,1)[circle,draw]{\scriptsize $u_1$};
\node(21) at(-2.25,1)[circle,draw]{\scriptsize $v_1$};
\node(12) at(-1.35,1)[circle,draw]{\scriptsize $u_2$};
\node(22) at(-0.45,1) [circle,draw]{\scriptsize $v_2$};
\node(13) at(0.45,1)[circle,draw]{\scriptsize $u_3$};
\node(23) at(1.35,1)[circle,draw]{\scriptsize $v_3$};
\node(14) at(2.25,1)[circle,draw]{\scriptsize $u_4$};
\node(24) at(3.15,1) [circle,draw]{\scriptsize $v_4$};
\draw[-](11) to  (21);
\draw[-](12) to  (22);\draw[-](22) to  (13);
\draw[-](13) to  (23);\draw[-](23) to  (14);
\draw[-](14) to  (24);
\draw[-](11)..controls+(0,2)and+(-1,0)..(0,3);
\draw[-](24)..controls+(0,2)and+(1,0)..(0,3);
\node(31) at(0,2.6)[circle,draw]{\qihao $w_1$};
\node(34) at(-1.8,1.8)[circle,draw]{\qihao $w_4$};
\node(32) at(0,-1)[circle,draw]{\qihao $w_2$};
\node(33) at(-1.3,-0.4) [circle,draw]{\qihao $w_3$};
\draw(11)..controls+(0.1,1)and+(-2,-0.1).. (31);
\draw[-](31) to  (12);
\draw[-](31) to  (13);\draw[-](31) to  (14);
\draw[-](21) ..controls+(0.3,-2)and+(-1,0.1)..  (32);
\draw[-](32) to  (22);
\draw[-](32) to  (23);\draw[-](32) to  (24);
\draw[-](34) to  (11);\draw[-](34) to  (12);
\draw[-](33) to  (21);\draw[-](33) to  (22);
%%%%%%%%%%%%%%%%%%%%%%%%%%%%%%%%%%%%%%%%%%%%%%%%%%%%%%%%%%%%%%%%%%%%%%%%%%
\node(35) at(-1.3,0.2) [circle,draw]{\qihao $w_5$};
\draw[-](35) to  (21);\draw[-](35) to  (22);
\node(25) at(-2.0,1.4) [circle,draw]{\scriptsize $v_5$};
\draw[-](25) to  (11);\draw[-](25) to  (12);
\draw[-](35) to  (25);
\node(15) at(-1.2,2.1) [circle,draw]{\scriptsize $u_5$};
\draw[-](15) to  (31);\draw[-](15) to  (34);

%%%%%%%%%%%%%%%%%%%%%%%%%%%%%%%%%%%%%%%%%%%%%%%%%%%%%%%%%%%%%%%%%%%%%%%%%%%%%%%%
\begin{scope}[xshift=7.4cm]
\node(11) at(-3.15,1)[circle,draw]{\scriptsize $u_1$};
\node(21) at(-2.25,1)[circle,draw]{\scriptsize $v_3$};
\node(12) at(-1.35,1)[circle,draw]{\scriptsize $u_2$};
\node(22) at(-0.45,1) [circle,draw]{\scriptsize $v_4$};
\node(13) at(0.45,1)[circle,draw]{\scriptsize $u_3$};
\node(23) at(1.35,1)[circle,draw]{\scriptsize $v_1$};
\node(14) at(2.25,1)[circle,draw]{\scriptsize $u_4$};
\node(24) at(3.15,1) [circle,draw]{\scriptsize $v_2$};
\draw[-](11) to  (21);\draw[-](21) to  (12);
\draw[-](12) to  (22);\draw[-](22) to  (13);
\draw[-](13) to  (23);\draw[-](23) to  (14);
\draw[-](14) to  (24);
\draw[-](11)..controls+(0,2)and+(-1,0)..(0,3);
\draw[-](24)..controls+(0,2)and+(1,0)..(0,3);
\node(33) at(0,2.6)[circle,draw]{\qihao $w_3$};
\node(32) at(1.1,2)[circle,draw]{\qihao $w_2$};
\node(34) at(0,-1)[circle,draw]{\qihao $w_4$};
\node(31) at(-1.2,-0.4) [circle,draw]{\qihao $w_1$};
\draw[-](33) to  (11);\draw[-](33) to  (12);
\draw[-](33) to  (13);
\draw[-](33) ..controls+(1,-0.1)and+(0,1)..  (14);
\draw[-](21) ..controls+(0.3,-2)and+(-1,0.1)..  (34);
\draw[-](34) to  (22);
\draw[-](34) to  (23);\draw[-](34) to  (24);
\draw[-](32) to  (13);\draw[-](32) to  (14);
\draw[-](31) to  (21);\draw[-](31) to  (22);
%%%%%%%%%%%%%%%%%%%%%%%%%%%%%%%%%%%%%%%%%%%%%%%%%%%%%%%%%%%%%%%%%%%%%%%%%%%%%%%%
\node(35) at(-1.2,0.2) [circle,draw]{\qihao $w_5$};
\draw[-](35) to  (21);\draw[-](35) to  (22);
\node(25) at(1.2,1.5)[circle,draw]{\scriptsize $v_5$};
\draw[-](25) to  (13);\draw[-](25) to  (14);
\node(15) at(0.6,2.2)[circle,draw]{\scriptsize $u_5$};
\draw[-](15) to  (33);\draw[-](15) to  (32);
\end{scope}
\end{tikzpicture}
\vskip0.5cm    (a)  The graph $\widetilde{G}_1$\  \ \ \ \ \ \  \ \ \  \ \  \  \  \ \ \ \  \ \ \ \ \  \ \ \     (b)  The graph $\widetilde{G}_2$
\end{center}
\end{figure}
\begin{figure}[htb]
\begin{center}
\begin{tikzpicture}
[inner sep=0pt]
\node(13) at(-5,1)[circle,draw]{\scriptsize $u_3$};
\node(34) at(-4,1)[circle,draw]{\scriptsize $w_4$};
\node(14) at(-3,1)[circle,draw]{\scriptsize $u_4$};
\node(11) at(-2,1) [circle,draw]{\scriptsize $u_1$};
\node(32) at(-1,1)[circle,draw]{\scriptsize $w_2$};
\node(12) at(0,1)[circle,draw]{\scriptsize $u_2$};
\node(21) at(1,1)[circle,draw]{\scriptsize $v_1$};
\node(31) at(2,1) [circle,draw]{\scriptsize $w_1$};
\node(22) at(3,1)[circle,draw]{\scriptsize $v_2$};
\node(23) at(4,1)[circle,draw]{\scriptsize $v_3$};
\node(33) at(5,1) [circle,draw]{\scriptsize $w_3$};
\node(24) at(6,1) [circle,draw]{\scriptsize $v_4$};
\draw[-](13) to  (34);\draw[-](34) to  (14);
\draw[-](11) to  (32);\draw[-](32) to  (12);
\draw[-](12) to  (21);\draw[-](21) to  (31);
\draw[-](31) to  (22);\draw[-](23) to  (33);
\draw[-](33) to  (24);
\node(25) at(0.5,2.5)[circle,draw]{\scriptsize $v_5$};
\node(35) at(-2.5,-0.5) [circle,draw]{\scriptsize $w_5$};
\node(15) at(3.5,-0.5) [circle,draw]{\scriptsize $u_5$};
\draw[-](25)..controls+(6.5,-0.7)and+(4.5,1.6)..(15);
\draw[-](25) to  (34);\draw[-](25) to  (33);
\draw[-](25) to  (32);\draw[-](25) to  (31);
\draw[-](35) to  (11);\draw[-](35) to  (13);
\draw[-](35) to  (14);\draw[-](35) to  (12);
\draw[-](15) to  (21);\draw[-](15) to  (23);
\draw[-](15) to  (22);\draw[-](15) to  (24);
\draw[-](35) to  (15);
\end{tikzpicture}
\vskip0.5cm    (c)  The graph $\widetilde{G}_3$
\caption{A $4$-girth planar decomposition of $K_{5,5,5}$}
\label{f 4}
\end{center}
\end{figure}

{\bf Case 3.}~ When $n=3$, Figure \ref{f 5} shows a $4$-girth planar decomposition of $K_{3,3,3}$ with two planar subgraphs.

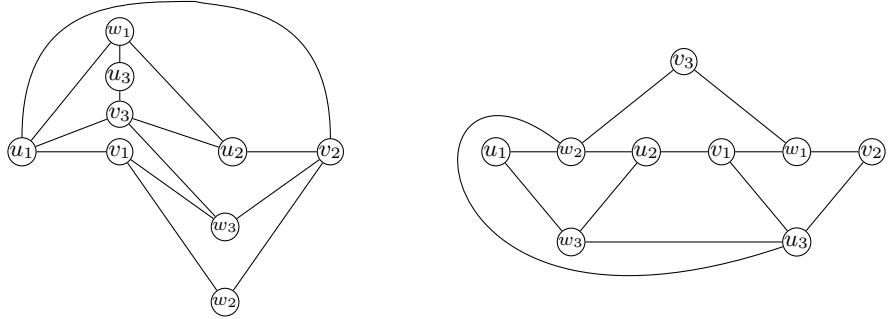
\begin{figure}[htb]
\begin{center}
\begin{tikzpicture}
[inner sep=0pt]
\node(11) at(-2.3,1)[circle,draw]{\scriptsize $u_1$};
\node(21) at(-1,1)[circle,draw]{\scriptsize $v_1$};
\node(12) at(0.5,1)[circle,draw]{\scriptsize $u_2$};
\node(22) at(1.8,1) [circle,draw]{\scriptsize $v_2$};
\draw[-](11) to  (21);
\draw[-](12) to  (22);
\draw[-](11)..controls+(0,2)and+(-1,0)..(0,3);
\draw[-](22)..controls+(0,2)and+(0.4,-0.1)..(0,3);
\node(31) at(-1.0,2.6)[circle,draw]{\qihao $w_1$};
\node(32) at(0.4,-1)[circle,draw]{\qihao $w_2$};
\draw[-](31) to  (11);\draw[-](31) to  (12);
\draw[-](32) to  (21);\draw[-](32) to  (22);
%%%%%%%%%%%%%%%%%%%%%%%%%%%%%%%%%%%%%%%%%%%%%%%%%%%%%%%%%%%%%%%%%%%%%%%%%%
\node(33) at(0.4,0.0) [circle,draw]{\qihao $w_3$};
\draw[-](33) to  (21);\draw[-](33) to  (22);
\node(23) at(-1,1.5) [circle,draw]{\scriptsize $v_3$};
\draw[-](23) to  (11);\draw[-](23) to  (12);
\node(13) at(-1,2) [circle,draw]{\scriptsize $u_3$};
\draw[-](13) to  (31);\draw[-](13) to  (23);
%%%%%%%%%%%%%%%%%%%%%%%%%%%%%%%%%%%%%%%%%%%%%%%%%%%%%%%%%%%%%%%%%%%%%%%%%%%%%%%%
\begin{scope}[xshift=6cm]
\node(11) at(-2,1) [circle,draw]{\scriptsize $u_1$};
\node(32) at(-1,1)[circle,draw]{\qihao $w_2$};
\node(12) at(0,1)[circle,draw]{\scriptsize $u_2$};
\node(21) at(1,1)[circle,draw]{\scriptsize $v_1$};
\node(31) at(2,1) [circle,draw]{\qihao $w_1$};
\node(22) at(3,1)[circle,draw]{\scriptsize $v_2$};
\draw[-](11) to  (32);\draw[-](32) to  (12);
\draw[-](12) to  (21);\draw[-](21) to  (31);
\draw[-](31) to  (22);\draw[-](23) to  (33);
\node(23) at(0.5,2.2)[circle,draw]{\scriptsize $v_3$};
\node(33) at(-1,-0.2) [circle,draw]{\qihao $w_3$};
\node(13) at(2,-0.2) [circle,draw]{\scriptsize $u_3$};
\draw[-](13)..controls+(-5.3,-1.8)and+(-2.3,1.8)..(32);
\draw[-](23) to  (32);\draw[-](23) to  (31);
\draw[-](33) to  (11);\draw[-](33) to  (12);
\draw[-](13) to  (21);\draw[-](13) to  (22);
\draw[-](33) to  (13);
\end{scope}
\end{tikzpicture}
\caption{A $4$-girth planar decomposition of $K_{3,3,3}$}
\label{f 5}
\end{center}
\end{figure}

Summarizing the above, the theorem is obtained.
\end{proof}

In \cite{R17}, the author posed the question whether $\theta(4,K_{10})=3$ or $4$, and conjectured that it is four. We disprove his conjecture by showing $\theta(4,K_{10})=3$.

\begin{remark}
The $4$-girth-thickness of $K_{10}$ is three.
\end{remark}

\begin{proof}
From \cite{R17}, we have $\theta(4,K_{10})\geq 3$. We draw a $4$-girth planar decomposition of $K_{10}$ with three planar subgraphs in Figure \ref{f 1}, which shows $\theta(4,K_{10})\leq 3$. The remark follows.

\begin{figure}[htb]
\begin{center}
\begin{tikzpicture}
[inner sep=0pt]
\node(1) at(0,1)[circle,draw]{\scriptsize $1$};
\node(2) at(1,1)[circle,draw]{\scriptsize $2$};
\node(4) at(2,1)[circle,draw]{\scriptsize $4$};
\node(3) at(3,1) [circle,draw]{\scriptsize $3$};
\node(5) at(0,0)[circle,draw]{\scriptsize $5$};
\node(6) at(1,0)[circle,draw]{\scriptsize $6$};
\node(8) at(2,0)[circle,draw]{\scriptsize $8$};
\node(7) at(3,0) [circle,draw]{\scriptsize $7$};
\node(9) at(1.5,1.8)[circle,draw]{\scriptsize $9$};
\node(10) at(1.5,-0.8)[circle,draw]{\scriptsize $10$};
\draw[-](1) to  (2);\draw[-](2) to  (4);\draw[-](4) to  (3);
\draw[-](5) to  (6);\draw[-](6) to  (8);\draw[-](8) to  (7);
\draw[-](1) to  (6);\draw[-](2) to  (8);\draw[-](4) to  (7);
\draw[-](9) to  (2);\draw[-](9) to  (3);
\draw[-](10) to  (5);\draw[-](10) to  (8);
\draw[-](9)..controls+(-1.5,-0.1)and+(-0.7,1)..(5);
\draw[-](10)..controls+(1.5,0.1)and+(0.7,-1)..(3);
%%%%%%%%%%%%%%%%%%%%%%%%%%%%%%%%%%%%%%%%%%%%%%%%%%%%%%%%%%%%%%%%%%%%%%%%%%%%%%%%
\begin{scope}[xshift=4.4cm]
\begin{scope}
\node(4) at(0,1)[circle,draw]{\scriptsize $4$};
\node(1) at(1,1)[circle,draw]{\scriptsize $1$};
\node(3) at(2,1)[circle,draw]{\scriptsize $3$};
\node(2) at(3,1) [circle,draw]{\scriptsize $2$};
\node(8) at(0,0)[circle,draw]{\scriptsize $8$};
\node(5) at(1,0)[circle,draw]{\scriptsize $5$};
\node(7) at(2,0)[circle,draw]{\scriptsize $7$};
\node(6) at(3,0) [circle,draw]{\scriptsize $6$};
\node(10) at(1.5,1.8)[circle,draw]{\scriptsize $10$};
\node(9) at(1.5,-0.8)[circle,draw]{\scriptsize $9$};
\draw[-](2) to  (3);\draw[-](3) to  (1);\draw[-](1) to  (4);
\draw[-](6) to  (7);\draw[-](7) to  (5);\draw[-](5) to  (8);
\draw[-](3) to  (6);\draw[-](1) to  (7);\draw[-](4) to  (5);
\draw[-](9) to  (7);\draw[-](9) to  (8);
\draw[-](10) to  (2);\draw[-](10) to  (1);
\draw[-](9)..controls+(-1.5,0.2)and+(-0.7,-1)..(4);
\draw[-](10)..controls+(1.5,-0.2)and+(0.7,1)..(6);
\draw[-](9)..controls+(3,0.1)and+(3,-0.1)..(10);
\end{scope}
%%%%%%%%%%%%%%%%%%%%%%%%%%%%%%%%%%%%%%%%%%%%%%%%%%%%%%%%%%%%%%%%%%%%%%%%%%%%%%%%
\begin{scope}[xshift=5.2cm]
\node(1) at(0,2)[circle,draw]{\scriptsize $1$};
\node(5) at(1,2)[circle,draw]{\scriptsize $5$};
\node(2) at(2,2)[circle,draw]{\scriptsize $2$};
\node(8) at(0,1) [circle,draw]{\scriptsize $8$};
\node(3) at(1,1)[circle,draw]{\scriptsize $3$};
\node(7) at(2,1)[circle,draw]{\scriptsize $7$};
\node(4) at(0,0)[circle,draw]{\scriptsize $4$};
\node(10) at(2,0) [circle,draw]{\scriptsize $10$};
\node(6) at(2.8,-0.8)[circle,draw]{\scriptsize $6$};
\node(9) at(-0.8,-0.8)[circle,draw]{\scriptsize $9$};
\draw[-](1) to  (5);\draw[-](5) to  (2);\draw[-](2) to  (7);
\draw[-](3) to  (7);\draw[-](3) to  (8);\draw[-](1) to  (8);
\draw[-](3) to  (5);\draw[-](8) to  (4);\draw[-](4) to  (10);
\draw[-](10) to  (7);
\draw[-](4) to (6);
\draw[-](2) to (6);
\draw[-](9) to (1);
\draw[-](9) to (6);
\end{scope}
\end{scope}
\end{tikzpicture}
\caption{A $4$-girth planar decomposition of $K_{10}$}
\label{f 1}
\end{center}
\end{figure}
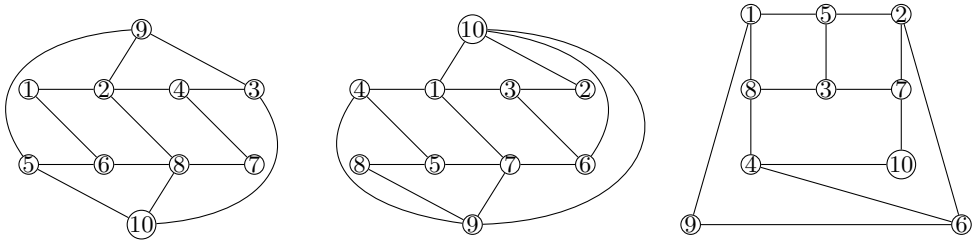

\end{proof}

\bibliography{bibfile}
\end{document}